\pgfplotsset{width=10cm,compat=1.9}
\newtheorem{theorem}{Theorem}[section]
\newtheorem{definition}[theorem]{Definition}
\newtheorem{lemma}[theorem]{Lemma}
\newtheorem{remark}[theorem]{Remark}
\newcommand{\R}{\mathbb{R}} 
\DeclareMathOperator{\LC}{LC} 
\newcommand{\Sp}{\mathbb{S}^{n-1}}
\DeclareMathOperator{\conv}{conv}
\DeclareMathOperator*{\argmax}{argmax}
\DeclareMathOperator{\dom}{dom}
\DeclareMathOperator{\epi}{epi}
\DeclareMathOperator{\hyp}{hyp}
\DeclareMathOperator{\vol}{vol}
\DeclareMathOperator{\proj}{proj}
\DeclareMathOperator{\Gr}{Gr}
\DeclareMathOperator{\lev}{lev}
\DeclareMathOperator{\base}{base}
\DeclareMathOperator{\supp}{supp}
\title{An extremal property of the symmetric decreasing rearrangement}
\author{Steven Hoehner and J\'ulia Novaes}
\date{\today}
\begin{document}

\setcounter{footnote}{0}
\maketitle

\begin{abstract}\noindent
It is shown that for a given log-concave function, its symmetric decreasing rearrangement is always harder to approximate in the symmetric difference metric by inner log-linearizations with a fixed number of break points. This extends a classical result of Macbeath (1951) from convex bodies to a functional setting.
\end{abstract}

\renewcommand{\thefootnote}{}
\footnotetext{2020 \emph{Mathematics Subject Classification}: 39B62 (52A20, 52A40, 52A41)}

\footnotetext{\emph{Key words and phrases}: Log-concave function, inner linearization, Steiner symmetrization, symmetric decreasing rearrangement}
\renewcommand{\thefootnote}{\arabic{footnote}}
\setcounter{footnote}{0}

\section {Introduction and main results}



In 1951, Macbeath \cite{Macbeath} proved that among all convex bodies of a given volume, the Euclidean ball is hardest to approximate by inscribed polytopes with a fixed number of vertices. More specifically, if $\mathcal{K}^n$ denotes the class of convex bodies in $\R^n$, $K\in\mathcal{K}^n$ and $\mathscr{C}_N^{\rm in}(K):=\{P\in\mathcal{K}^n: P\subset K\text{ is a polytope with at most }N\text{ vertices}\}$, then
\begin{equation}\label{macbeath-ineq}
\inf_{P\in\mathscr{C}_N^{\rm in}(K)}\vol_n(K\setminus P)\leq \inf_{Q\in\mathscr{C}_N^{\rm in}(B_K)}\vol_n(B_K\setminus Q).
\end{equation}
Here $\vol_n$ is the $n$-dimensional Lebesgue measure and $B_K$ is the Euclidean ball centered at the origin  with the same volume as $K$. For results analogous to \eqref{macbeath-ineq} on the mean width approximation of convex bodies by polytopes, we refer the reader to   \cite{BucurFragalaLamboley,Schneider-1967, Schneider1971}. 

The goal of this  note is to extend Macbeath's result from convex bodies to a functional setting. Let $\LC_{\rm c}(\R^n)$ denote the class of log-concave functions on $\R^n$ which are upper semicontinuous and coercive. For $f\in\LC_{\rm c}(\R^n)$, let $f^*$ denote the symmetric decreasing rearrangement of $f$. The quantity $J(f)-J(p)$ is an analogue of the symmetric difference metric in the setting of log-concave functions (see also \cite{Mussnig-Li}). Moreover, for a given log-concave function $f$, an inner log-linearization of $f$ with $N$ break points is the functional analogue of a polytope inscribed in a convex body with $N$ vertices. Let $\mathscr{P}_N(f)$ be the set of inner log-linearizations of $f$ with at most $N$ break points, and let $J(f)=\int_{\R^n}f(x)\,dx$ denote the total mass of $f$ (see Section \ref{background-sec} below for the precise definitions). Our main result, which is the extension of \eqref{macbeath-ineq} to log-concave functions, reads as follows.

\begin{theorem}\label{mainThm} 
Fix integers $n\geq 1$ and $N\geq n+2$, and consider the functional $G_{n,N}:\LC_{\rm c}(\R^n)\to [0,\infty)$ defined by
\[
G_{n,N}(f) := 
\inf_{p\in\mathscr{P}_N(f)}\{J(f)-J(p)\}.
\]
Then for every $f\in\LC_{\rm c}(\R^n)$, we have
\[
G_{n,N}(f) \leq G_{n,N}(f^*).
\]
\end{theorem}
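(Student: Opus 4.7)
The plan is to adapt Macbeath's Steiner-symmetrization argument to the functional setting. For $v \in \Sp$, define $S_v f \in \LC_{\rm c}(\R^n)$ by fiberwise one-dimensional symmetric decreasing rearrangement of $f$ along lines parallel to $v$. Standard facts give that $S_v$ preserves log-concavity, that $J(S_v f) = J(f)$, and that iterating along a suitably dense sequence of directions yields $L^1$-convergence to $f^*$. Combined with continuity of $G_{n,N}$ under $L^1$-convergence of log-concave functions (via a compactness argument on minimizing inscribed log-polytopes, whose vertices lie in a bounded region once the support of $f$ is fixed), the theorem reduces to the single-step inequality
\[
G_{n,N}(f) \leq G_{n,N}(S_v f), \qquad v \in \Sp.
\]

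The natural framework is the lifted hypograph picture: for $g \in \LC_{\rm c}(\R^n)$, set $C_g := \hyp(\log g) \subset \R^{n+1}$ and use the identity $J(g) = \int_{C_g} e^s \, \dint(x, s)$. A log-affine minorant of $g$ with $N$ break points corresponds bijectively to an inscribed downward-closed (in the $s$-direction) polyhedron with $N$ top vertices, whose total mass equals this $e^s$-weighted volume. Steiner symmetrization of $g$ with respect to $v^{\perp}$ corresponds precisely to Steiner symmetrization of $C_g$ in the direction $v \in \R^n \subset \R^{n+1}$. The single-step inequality thus becomes a weighted Macbeath-type statement: for any downward-closed polyhedron $Q$ with $N$ top vertices inscribed in $C_{S_v f}$, produce a downward-closed polyhedron $P$ with $N$ top vertices inscribed in $C_f$ satisfying $\int_P e^s \, \dint \geq \int_Q e^s \, \dint$.

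A natural candidate construction is to shift each top vertex $(x_i, y_i) = (x_i^H + t_i v, y_i)$ of $Q$ along $v$ to $(x_i^H + (t_i + \delta_i) v, y_i) \in C_f$, where $\delta_i$ lies in an explicit interval determined by the $v$-fiber $[a_i, b_i] = \{t \in \R : \log f(x_i^H + t v) \geq y_i\}$, and take $P$ to be the downward closure of the convex hull of the shifted vertices. Simple examples (with $\mu_i := (a_i + b_i)/2$ convex in the horizontal variable) show that the ``midpoint shift'' $t_i + \delta_i = \mu_i$ can collapse the shifted vertices to lower-dimensional configurations and strictly decrease the weighted volume, so a fixed shift rule is not enough and the map $F:(x,s)\mapsto (x+\mu(x^H,s)v, s)$ — which has unit Jacobian and preserves $e^s$ — cannot be used directly because its image is generically non-convex. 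The hardest step is therefore to establish that for every $Q$ \emph{some} choice of $\vec{\delta}$ works. My intended approach is a convexity argument: for fixed $\{(x_i^H, y_i)\}$, the weighted volume $W(\vec{t}')$ of the downward-closed convex hull is a convex function of the $v$-coordinates and invariant under the diagonal translation $(1, \ldots, 1)$; one then compares $\max W$ over the symmetric box $\prod_i[-L_i/2, L_i/2]$ (for $C_{S_v f}$) with $\max W$ over the translated box $\prod_i [a_i, b_i]$ (for $C_f$), exploiting that the latter — arising from a non-centered $f$ — encodes strictly more geometric freedom. An alternative route, in the spirit of Brascamp-Lieb-Luttinger, is via the continuous Steiner deformation $(f_s)_{s\in[0,1]}$, aiming to show that $s \mapsto \max_{p \in \mathscr{P}_N(f_s)} J(p)$ is non-increasing; tracking the optimal inscribed polyhedron along the deformation and signing the derivative of its weighted volume is the principal technical difficulty.
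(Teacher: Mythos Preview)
Your overall architecture --- reduce to a single Steiner step, then iterate and pass to the limit using $L^1$-continuity of $G_{n,N}$ --- is exactly the paper's strategy, and your lifted-hypograph framework with the $e^s$-weighted volume is equivalent to the paper's use of $\conv_f$ and $J$. The gap is that you do not actually prove the single-step inequality; you only sketch two possible routes (a convexity/box-comparison argument and a continuous Steiner deformation) and flag the hardest step as open. As written this is a plan, not a proof.

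The ingredient you are missing is Macbeath's classical \emph{two-candidate averaging trick}, which the paper carries out in its Lemma~3.1. Given a log-affine minorant $p_f$ of $S_H f$ with vertices $(x_i',t_i)$, one does \emph{not} look for a single shift rule. Instead one builds two log-affine minorants of $f$: $q_f$ with vertices $(x_i',\,t_i+\mu_i)$ and $r_f$ with vertices $(x_i',\,-t_i+\mu_i)$, where $\mu_i=\tfrac12(f^+(x_i')+f^-(x_i'))$ is the fiber midpoint. Both lie in $\hyp(f)$ because $|t_i|\le \tfrac12(f^+(x_i')-f^-(x_i'))$. A direct inclusion argument --- each $(x_i',t_i)$ lies in the slab $\{\tfrac12(q_f^- - r_f^+)\le t\le \tfrac12(q_f^+ - r_f^-)\}$, hence so does $P_f=\conv_f\{(x_i',t_i)\}$ --- yields $J(p_f)\le \tfrac12\bigl(J(q_f)+J(r_f)\bigr)$, so at least one of $q_f,r_f$ works. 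In your language this is precisely the midpoint-convexity inequality $W(\vec t)\le \tfrac12\bigl[W(\vec t+\vec\mu)+W(-\vec t+\vec\mu)\bigr]$, proved by elementary geometry rather than assumed; it makes both your box-comparison heuristic and the continuous deformation unnecessary. Note also that the relevant shift is \emph{by} $\mu_i$ (new position $t_i+\mu_i$), not \emph{to} $\mu_i$; your counterexample with collapsing vertices concerns the latter and does not obstruct the former.
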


Choosing a specific log-concave function in Theorem \ref{mainThm}, namely the characteristic function $\mathbbm{1}_K$ of a convex body $K\in\mathcal{K}^n$, we will recover \eqref{macbeath-ineq}, as shown in Remark \ref{mainRmk} below. 

To the best of our knowledge, Theorem \ref{mainThm} has not appeared before in the literature. The purpose of this note is to give a direct,  self-contained proof of Theorem \ref{mainThm}. Our simple approach proceeds in two main steps by showing the functional $G_{n,N}$ is monotone under a  Steiner symmetrization and is $L^1$-upper  semicontinuous.  Let us remark here that Theorem \ref{mainThm} can also be derived from the stochastic form of Brunn's principle for $s$-concave measures  by Pivovarov and Rebollo Bueno \cite[Theorem 1.1]{PB-2020}, where a different, probabilistic approach was used. For other works related to Theorem \ref{mainThm}, see, e.g., \cite{BucurFragalaLamboley,Chen-TAMS-2018,Hoehner-2023}.

\subsection{Overview of the paper}

In the next section, we collect the necessary background and notation. In Section  \ref{proof-section}, we state and prove a simple and general extremal result, namely, Theorem \ref{general-thm}, and use it to prove Theorem \ref{mainThm}. Next, in Section \ref{alpha-section} we extend Theorem \ref{mainThm} to all $\alpha$-concave functions $f:\R^n\to[0,\infty)$ (for certain $\alpha\in\mathbb{R}$) satisfying  typical regularity conditions. Finally, in Section \ref{sec:other-apps}, we highlight  further illustrative examples of Theorem \ref{general-thm}.


\section{Background and notation}\label{background-sec}

We will work in $n$-dimensional Euclidean space $\R^n$ with inner product $\langle x,y\rangle=\sum_{i=1}^n x_i y_i$ and norm $\|x\|=\sqrt{\langle x,x\rangle}$. The Euclidean unit ball in $\R^n$ centered at the origin $o$ is denoted $B_n=\{x\in\R^n:\,\|x\|\leq 1\}$. For positive integers $j$ and $m$ with $j\leq m$, the Grassmannian manifold of all $j$-dimensional subspaces of $\R^m$ is denoted $\Gr(m,j)$.

The \emph{convex hull} $\conv A$ of a set $A\subset\R^n$ is the smallest convex set containing $A$, that is, $\conv A=\bigcap \{C: C\text{ is convex and }C\supset A\}$. A \emph{convex body} in $\R^n$ is a convex, compact subset of $\R^n$ with nonempty interior.  The \emph{characteristic function} $\mathbbm{1}_K:\R^n\to\{0,1\}$ of $K\in\mathcal{K}^n$ is defined by 
\[
\mathbbm{1}_K(x) = \begin{cases}
    1, &\text{if }x\in K;\\
    0, &\text{if }x\not\in K.
\end{cases}
\]
In particular, the volume of $K$ is $\vol_n(K)=\int_{\R^n}\mathbbm{1}_K(x)\,dx$.  For more background on convex bodies, we refer the reader to the book  \cite{SchneiderBook} by Schneider.
\subsection{Log-concave functions}

A function $f:\R^n\to[0,\infty)$ is \emph{log-concave} if for every $x,y\in\dom(f)$ and every $\lambda\in[0,1]$, we have
\[
f(\lambda x+(1-\lambda)y) \geq f(x)^\lambda f(y)^{1-\lambda}.
\]
Put another way, this says that the logarithm of $f$ is concave, that is for every $x,y\in\supp(f)=\{x\in\dom(f):\, f(x)\neq 0\}$ and every $\lambda\in[0,1]$,
\[
\log f(\lambda x+(1-\lambda)y) \geq \lambda\log f(x)+(1-\lambda)\log f(y).
\]

Let ${\rm Conv}(\R^n)$ denote the set of all convex functions $\psi:\R^n\to\R\cup\{+\infty\}$. Every  log-concave function $f$ can be expressed in the form $f=e^{-\psi}$ for some $\psi\in{\rm Conv}(\R^n)$. Let $\LC(\R^n)=\{f=e^{-\psi}:\, \psi\in{\rm Conv}(\R^n)\}$ denote the class of log-concave functions on $\R^n$. In particular, if $K\in\mathcal{K}^n$ then $\mathbbm{1}_K$ is log-concave, and a typical way of embedding $\mathcal{K}^n$ into $\LC(\R^n)$ is  via the mapping $K\hookrightarrow\mathbbm{1}_K$.  

Every convex function $\psi\in{\rm Conv}(\R^n)$ is uniquely determined by its \emph{epigraph} $\epi(\psi)=\{(x,t)\in\R^n\times\R:\, t\geq\psi(x)\}$, and every log-concave function $f\in\LC(\R^n)$ is uniquely determined by its \emph{hypograph} $\hyp(f)=\{(x,t)\in\R^n\times\R:\, t\leq f(x)\}$. For any $t\in \R$ and any function $g:\R^n\to\R$,  the \emph{superlevel set} $\lev_{\geq t}g$  is defined as $\lev_{\geq t}g=\{x\in\R^n:\, g(x)\geq t\}$. In particular, for every $K\in\mathcal{K}^n$ we have 
\begin{equation}\label{lev-set-indicator}
\lev_{\geq t}\mathbbm{1}_K = \begin{cases}
    K,  &\text{if }0<t\leq 1;\\
    \varnothing,  &\text{if }t>1.
\end{cases}
\end{equation}

 A convex function $\psi:\R^n\to\R\cup\{+\infty\}$ is \emph{lower semicontinuous} if $\epi(\psi)$ is closed, and $\psi$ is \emph{coercive} if $\lim_{\|x\|\to\infty}\psi(x)=\infty$. The \emph{(effective) domain} $\dom(\psi)$ of $\psi$ is the set $\dom(\psi)=\{x\in\R^n:\,\psi(x)<+\infty\}$. Let ${\rm Conv}_{\rm c}(\R^n)$ denote the set of convex functions $\psi:\R^n\to\R\cup\{+\infty\}$ which are coercive, lower semicontinuous, $\psi\not\equiv +\infty$, $o\in\dom(\psi)$ and $\dom(\psi)$ is not contained in a hyperplane. We also let $\LC_{\rm c}(\R^n):=\left\{f=e^{-\psi}:\,\psi\in{\rm Conv}_{\rm c}(\R^n)\right\}$. Note that every $f\in\LC_{\rm c}(\R^n)$ is upper semicontinuous, coercive (i.e., $\lim_{|x|\to\infty}f(x)=0$) and $o\in\supp(f)$.  Furthermore, if $f=e^{-\psi}\in\LC_{\rm c}(\R^n)$, then  $f$ is integrable (see, for example, \cite[p. 3840]{cordero-erasquin-klartag}). 
 
 The \emph{total mass} $J(f)$ of an integrable function $f:\R^n\to[0,\infty)$  is defined as $J(f)=\int_{\R^n}f(x)\,dx$. It may be regarded as an analogue of volume in the setting of log-concave functions. In particular, for every $K\in\mathcal{K}^n$ we have $J(\mathbbm{1}_K)=\vol_n(K)$.

For more background on log-concave functions, we refer the reader to, e.g., \cite{Colesanti-inbook}.

\subsection{Inner linearizations and piecewise affine approximation of convex functions}\label{inner-linearizations-sec}

In the setting of convex functions, the analogue of a polytope inscribed in a convex body with $N$ vertices is called an inner linearization with $N$ break points. Inner linearizations  have been studied extensively in convex optimization  and stochastic programming; see for example \cite{Bertsekas2011, BL-stochastic-book}.  Let us now describe their construction. Given   $\psi\in{\rm Conv}(\R^n)$  and   a finite point set $\mathbf{X}_N:=\{(x_i,y_i)\}_{i=1}^N\subset\epi(\psi)$ with $x_i\in\dom(\psi)$ and $y_i\geq\psi(x_i)$, consider  the convex hull of the vertical rays with these endpoints:
\begin{equation}\label{union-rays}
\textstyle \conv^{\uparrow}\mathbf{X}_N := \conv\left(\bigcup_{i=1}^N\{(x_i,y): y\geq y_i\}\right).
\end{equation}
Here the  subscript $\uparrow$ indicates that the vertical rays lie above each point $(x_i,y_i)$. The function $p_{\mathbf{X}_N}$ whose epigraph is the set \eqref{union-rays} is called an \emph{inner linearization} of $\psi$ (see \cite{Bertsekas2011, BL-stochastic-book}). 

\begin{definition}\label{inner-linearization-def}
Given $\psi\in{\rm Conv}(\R^n)$ and a finite set of points $\mathbf{X}_N:=\{(x_1,y_1),\ldots,(x_N,y_N)\}\subset\epi(\psi)$, the \emph{inner linearization} $p_{\mathbf{X}_N}:\R^n\to\R$ is defined by
\[
p_{\mathbf{X}_N}(x):=\begin{cases}
\textstyle\min\left\{\sum_{i=1}^N \lambda_i y_i: \substack{\sum_{i=1}^N\lambda_i=1,\,\lambda_1,\ldots,\lambda_N\geq 0,\\\sum_{i=1}^N\lambda_i x_i=x}\right\}, &\text{if }x\in\conv\{x_1,\ldots,x_N\};\\
+\infty, &\text{otherwise}.
\end{cases}
\]
\end{definition}
 In particular, $p_{\mathbf{X}_N}\geq\psi$ and $p_{\mathbf{X}_N}$  is the unique piecewise affine function with $\dom(p_{\mathbf{X}_N})=\conv\{x_1,\ldots,x_N\}$ and $\epi(p_{\mathbf{X}_N})=\conv^{\uparrow}\mathbf{X}_N$. Moreover, $p_{\mathbf{X}_N}\in{\rm Conv}_{\rm c}(\R^n)$. A point $x\in\dom(\psi)$ is called a \emph{break point} of $p_{\mathbf{X}_N}$ if $(x,y)\in\R^n\times \R$ is an extreme point of $\epi(p_{\mathbf{X}_N})$. 
 
 Just as any  convex body $K\in\mathcal{K}^n$ may be approximated by an inscribed polytope $P\subset K$ with at most $N$ vertices in the symmetric difference metric $\vol_n(K)-\vol_n(P)$, a convex function $\psi\in{\rm Conv}_{\rm c}(\R^n)$ may be approximated by an inner linearization $p\geq\psi$ with at most $N$ break points under the total mass difference $J(e^{-\psi})-J(e^{-p})$.  From this perspective, inner linearizations have recently found applications at the interface of Convex Geometry, Functional Analysis and Probability; see \cite{Hoehner-2023,PB-2020,Rinott} and the references therein. Note that for a given integer $N\geq n+2$ and $\psi\in{\rm Conv}_{\rm c}(\R^n)$,  it follows by a compactness argument that there exists a \emph{best-approximating inner linearization} $\widehat{p}\in\mathscr{C}_N(\psi)$ such that $F_N(\psi)=J(e^{-\widehat{p}})$.

Given $\psi\in{\rm Conv}(\R^n)$ and an integer $N\geq n+2$, let 
$\mathscr{C}_N(\psi)$ be the set of all inner linearizations of $\psi$ with at most $N$ break points.  For $f=e^{-\psi}\in\LC_{\rm c}(\R^n)$, we also set $\mathscr{P}_N(f):=\{e^{-p}:\,p\in\mathscr{C}_N(\psi)\}$. A function  $q\in\mathscr{P}_N(f)$ is called an \emph{inner log-linearization} of $f$  (see also \cite{Hoehner-2023,PB-2020,Rinott}). Note that if $q=e^{-p}\in\mathscr{P}_N(f)$, then   $q\leq f$ and $q\in{\rm LC}_{\rm c}(\R^n)$. A point $x\in\supp(f)$ is called a \emph{break point} of $q$ if $x$ is a break point of $p$.

Since the function $x\mapsto e^{-x}$ is a bijection from $\R$ to $(0,\infty)$, one may use inner linearizations of $\psi$ to define a functional version of the convex hull operation for a given set of points in the hypograph of $f=e^{-\psi}$. This hypograph construction has also been  studied in the literature before; see \cite{Hoehner-2023,PB-2020,Rinott}. More generally, one may consider an arbitrary set $X\subset\R^n\times\R$ and define $\conv^\uparrow X=\conv(\cup_{(x,y)\in X}\{(x,z)\in\R^n\times\R:\,z\geq y\})$; see \cite{PB-2020,Rinott}.

\begin{definition}\label{def-f-cvx-hull}
Given $\psi\in{\rm Conv}(\R^n)$ and a  set $S\subset\epi(\psi)$, let $e^{-S}:=\{(x,e^{-y})\in\R^n\times\R:\, (x,y)\in S\}\subset\hyp(f)$.   For $f=e^{-\psi}\in\LC(\R^n)$ and a finite set $\mathbf{Y}_N\subset\hyp(f)$, we define 
\[
\conv_{\downarrow} \mathbf{Y}_N:=e^{-\conv^{\uparrow}\left(-\log \mathbf{Y}_N\right)}
\]
where $-\log \mathbf{Y}_N:=\{(x,-\log y)\in\R^n\times\R:\,(x,y)\in \mathbf{Y}_N\}\subset\epi(\psi)$.     
\end{definition}

Geometrically, this construction means: (i) first map $\mathbf{Y}_N$ to its preimage in $\epi(\psi)$ via the function $(x,y)\mapsto (x,-\log y)$, then (ii) take the convex hull of the vertical rays of the resulting set, then (iii) apply the exponential map $(x,y)\mapsto (x,e^{-y})$ to this set. Its  image, contained in $\hyp(f)$, is $\conv_{\downarrow}\mathbf{Y}_N$. 

Note that for any finite set of points $\mathbf{Y}_N\subset\hyp(f)$, the set $\conv_{\downarrow}\mathbf{Y}_N$ is well-defined and unique.  In particular, if   $\mathbf{Y}_N:=\{(x_1,y_1),\ldots,(x_N,y_N)\}\subset \hyp(f)$, then we have 
\[
\conv_{\downarrow}\mathbf{Y}_N=e^{-\conv\left(\bigcup_{i=1}^N\{(x_i,-\log y):\, y\leq y_i\}\right)}. 
\]

 The construction of $\conv_{\downarrow}\mathbf{Y}_N$ is illustrated below. For simplicity, we have chosen the log-concave function $f(x)=e^{-\psi(x)}$ with $\psi(x)=x^2$.
 \begin{center}
     \includegraphics[scale=0.52]{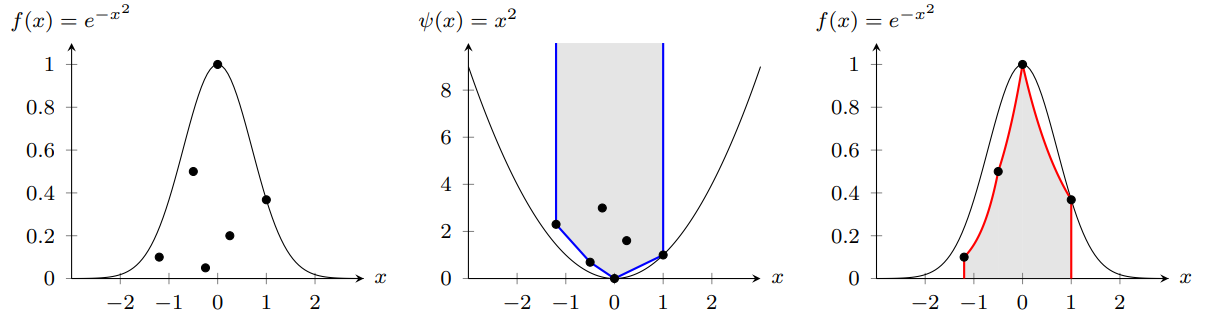}
 \end{center}
 \noindent{\bf\footnotesize Figure 1.} {\footnotesize In the left figure, a finite point set $\mathbf{Y}_N$ is contained in $\hyp(f)$. In the middle figure, the set $\mathbf{Y}_N$ has been mapped to its preimage $-\log\mathbf{Y}_N$ in $\epi(\psi)$. The shaded  region is $\conv_{\uparrow}(-\log\mathbf{Y}_N)$, and the blue curve in its  boundary is the graph of the corresponding inner linearization $p_{-\log\mathbf{Y}_N}$. In the right figure, $\conv_{\uparrow}(-\log\mathbf{Y}_N)$ has been mapped back to its image in $\hyp(f)$; the shaded region there is $\conv_{\downarrow}\mathbf{Y}_N$ and the red curve in its boundary is the graph of the corresponding inner log-linearization $\exp(-p_{-\log\mathbf{Y}_N})$, which has four break points. The area (2-dimensional volume) of this region equals $J(\exp(-p_{-\log\mathbf{Y}_N}))$.} 
 
 \subsection{Symmetric decreasing rearrangements}

 Given a measurable set $A$ in $\R^n$, let $B_A:=\left(\frac{\vol_n(A)}{\vol_n(B_n)}\right)^{1/n}\cdot B_n$ denote the \emph{spherical symmetrization} of $A$, which is the Euclidean ball in $\R^n$ centered at the origin with the same volume as $A$. Given a measurable function $f:\R^n\to[0,\infty)$, the \emph{symmetric decreasing rearrangement} $f^*$ of $f$ may be defined by the relation 
 \[
\text{for all } t\in\R, \quad \lev_{\geq t}f^*=B_{\lev_{\geq t}f}.
 \]
  If $f$ is also integrable, then by the layer-cake principle we have
 \[
J(f^*)=\int_{\R}\vol_n(\lev_{\geq t}f^*)\,dt
=\int_{\R}\vol_n(B_{\lev_{\geq t}f})\,dt
=\int_{\R}\vol_n(\lev_{\geq t}f)\,dt=J(f).
 \]
 
     Let $f:\R^n\to[0,\infty)$ be a measurable function. Then $f^*$ is lower semicontinuous, and is (radially) decreasing in the sense that $\|x\|\leq \|y\|$ implies $f^*(x)\geq f^*(y)$. The function $f^*$ is spherically symmetric, i.e., if   $\|x\|=\|y\|$ then $f^*(x)=f^*(y)$. (Equivalently, $f^*$ is invariant under orthogonal transformations.) Note also that if $f$ is log-concave, then $f^*$ is log-concave.  Moreover, if $p\in[1,\infty)$ and $f\in L^p(\R^n)$, then $f^*\in L^p(\R^n)$ and $\|f\|_p=\|f^*\|_p$. For more background on rearrangements, see, e.g., \cite{burchard2009}.

        
        
        



 \subsection{Steiner symmetrizations}

 Given a nonempty measurable and closed set $A\subset\R^n$ and a hyperplane $H\in\Gr(n,n-1)$, the \emph{Steiner symmetral} $S_H A$ of $A$ with respect to $H$ is given by
\begin{equation}\label{steiner-def-set}
    S_H A = \left\{(x,t)\in (\proj_H A)\times\R:\, |t|\leq \frac{1}{2}(A^+(x)-A^-(x))\right\}
\end{equation}
where for $x\in \proj_H A$ we set 
\begin{align*}
    A^+(x)&:=\sup\{t\in\R:\,(x,t)\in A\}\\ 
    A^-(x)&:=\inf\{t\in\R:\,(x,t)\in A\}. 
\end{align*}

Next, let us recall the definition of a Steiner symmetrization of a measurable function with respect to a hyperplane. First   note that any point $x\in\R^{n+1}$ is uniquely determined by its orthogonal projection $\proj_{\widetilde{H}}x$ in the hyperplane $\widetilde{H}$ and its  signed perpendicular distance $t\in\R$ to $\widetilde{H}$. Given $H=u^\perp\in\Gr(n,n-1)$, where $u$ is a unit vector in $\R^n$,  we 
set $\widetilde{H}:=H\times \R\in\Gr(n+1,n)$.   
Now, given a log-concave function $f:\R^n\to[0,\infty)$, define the functions $f^+,f^-:\proj_{\widetilde{H}}\hyp(f)\to\R$ by
\begin{align*}
    f^+(h)&:=\sup\{t\in\R: h+tu\in\hyp(f)\}\\
    f^-(h)&:=\inf\{t\in\R: h+tu\in\hyp(f)\}.
\end{align*}
In particular, if $f\in\LC_{\rm c}(\R^n)$ then $f^+(h)$ and $f^-(h)$ are finite for all $h\in\proj_{\widetilde{H}}\hyp(f)$ since $\lim_{\|x\|\to\infty}f(x)=0$. Thus, the hypograph of $f\in\LC_{\rm c}(\R^n)$ may be expressed as
\begin{equation}\label{epi-psi}
    \hyp(f)=\left\{(h,t)\in \left(\proj_{\widetilde{H}}\hyp(f)\right)\times\R: \, f^-(h)\leq t\leq f^+(h)\right\}.
\end{equation}
Applying the definition \eqref{steiner-def-set} of the Steiner symmetral of a set, we get 
\begin{equation}\label{steiner-psi}
S_{\widetilde{H}}\hyp(f) = \left\{(h,t)\in \left(\proj_{\widetilde{H}}\hyp(f)\right)\times\R: |t|\leq \frac{1}{2}(f^+(h)-f^-(h))\right\}.
\end{equation}
The \emph{Steiner symmetral} $S_H f$ of $f$ with respect to $H$ may be defined by the relation $\hyp(S_H f)=S_{\widetilde{H}}\hyp(f)$ (see \cite{CGN-2018}). Note that  by Cavalieri's principle, we have $J(f)=J(S_H f)$.

For an ordered sequence of hyperplanes $H_1,\ldots,H_m\in\Gr(n,n-1)$ and a measurable function $f:\R^n\to\R$, we use the notation $\bigcirc_{i=1}^m S_{H_i}f:=S_{H_m}\cdots S_{H_1}f$ to denote the corresponding ordered sequence of iterated Steiner symmetrizations. Any measurable function $f:\R^n\to\R$ can be made to converge to its symmetric decreasing rearrangement $f^*$ via a sequence of Steiner symmetrizations. 

\begin{lemma}\label{convergence-lemma}
   Given $f\in \LC_c(\R^n)$, there exists a sequence of hyperplanes $\{H_i\}_{i=1}^\infty\subset \Gr(n,n-1)$ such that $\bigcirc_{i=1}^m S_{H_i}f$ converges to $f^*$ in the $L^1(\R^n)$ metric as $m\to\infty$.
\end{lemma}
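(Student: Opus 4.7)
The plan is to deduce the $L^p$-convergence for functions from the classical convergence theorem for iterated Steiner symmetrizations of sets, via the layer-cake decomposition. Since $f^*=|f|^*$, we may assume $f\ge 0$. The key preliminary observation is that Steiner symmetrization commutes with taking superlevel sets: from the defining identity $\hyp(S_H f)=S_{\widetilde H}\hyp(f)$ and formula \eqref{steiner-psi}, one verifies directly that $\lev_{\geq t}(S_H f)=S_H(\lev_{\geq t} f)$ for every $t>0$, and iterating,
\[
\lev_{\geq t}\Bigl(\bigcirc_{i=1}^m S_{H_i} f\Bigr)=\bigcirc_{i=1}^m S_{H_i}(\lev_{\geq t} f).
\]

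Combining this identity with the layer-cake formula $f(x)=\int_0^\infty \mathbbm{1}_{\lev_{\ge t} f}(x)\,dt$ gives
\[
\Bigl(\bigcirc_{i=1}^m S_{H_i} f\Bigr)(x)-f^*(x)=\int_0^\infty \bigl[\mathbbm{1}_{A_m(t)}(x)-\mathbbm{1}_{A^*(t)}(x)\bigr]\,dt,
\]
where $A_m(t):=\bigcirc_{i=1}^m S_{H_i}(\lev_{\ge t} f)$ and $A^*(t):=(\lev_{\ge t} f)^*$ have the same $n$-dimensional volume, so $|\mathbbm{1}_{A_m(t)}-\mathbbm{1}_{A^*(t)}|=\mathbbm{1}_{A_m(t)\triangle A^*(t)}$. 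Minkowski's integral inequality then yields
\[
\Bigl\|\bigcirc_{i=1}^m S_{H_i} f - f^*\Bigr\|_p \le \int_0^\infty \vol_n\bigl(A_m(t)\triangle A^*(t)\bigr)^{1/p}\,dt.
\]

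At this point I would invoke the classical convergence theorem for iterated Steiner symmetrizations of sets (Mani--Levitska, together with later refinements due to Van Schaftingen, Klain, and Bianchi--Burchard--Gronchi--Volcic): there exists a \emph{universal} sequence of hyperplanes $\{H_i\}_{i=1}^\infty\subset\Gr(n,n-1)$ such that for every measurable set $A\subset\R^n$ of finite measure, $\vol_n(\bigcirc_{i=1}^m S_{H_i} A \,\triangle\, A^*)\to 0$ as $m\to\infty$. Applied to each level set $\lev_{\ge t} f$, the integrand in the last display converges to $0$ pointwise in $t$. To pass to the limit by dominated convergence, I would first reduce to the case that $f$ is bounded with compact support by truncating $f_k:=\min(f,k)\mathbbm{1}_{B(o,k)}\to f$ in $L^p$ and exploiting the $L^p$-nonexpansiveness of both $S_H$ (a Fubini-type consequence of the one-dimensional symmetric decreasing rearrangement inequality) and of $f\mapsto f^*$. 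For such $f$ the integrand is dominated by the constant $2\vol_n(\supp f)^{1/p}$ on $(0,\|f\|_\infty]$ and vanishes elsewhere, so dominated convergence delivers the claim. The main obstacle is the universality of the sequence of hyperplanes; if one only needs a sequence depending on $f$ (as the lemma asks), one may alternatively construct it greedily, choosing $H_{m+1}$ at each stage to most decrease $\|S_{H_{m+1}}(\bigcirc_{i=1}^m S_{H_i} f) - f^*\|_p$, and establish convergence by a compactness/uniqueness argument in $L^p$.
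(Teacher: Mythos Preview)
The paper does not prove this lemma at all: it simply records that the statement is a special case of Vol\v{c}i\v{c}'s theorem on almost sure convergence of random Steiner symmetrizations of $L^p$ functions, and cites \cite[Thm.~4.3]{Volcic}. Your proposal instead outlines an actual argument, reducing the functional statement to the set-theoretic one via the layer-cake formula and Minkowski's integral inequality, and then appealing to convergence results for iterated Steiner symmetrizations of sets.

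Your reduction is sound: the commutation $\lev_{\geq t}(S_H f)=S_H(\lev_{\geq t} f)$ does follow from the hypograph definition, and the Minkowski bound and the truncation/nonexpansiveness reduction to bounded compactly supported $f$ are both standard and correct. The one substantive point to flag is that, because $f$ has uncountably many superlevel sets, you genuinely need a \emph{single} sequence of hyperplanes that works simultaneously for all of them; this is exactly why you invoke a \emph{universal} sequence for measurable sets. Such universal sequences do exist (Van~Schaftingen; Bianchi--Burchard--Gronchi--Vol\v{c}i\v{c}), but that result is itself at least as deep as the Vol\v{c}i\v{c} theorem the paper cites directly for functions, so your route trades one black box for another of comparable strength rather than giving a more elementary proof. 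Your fallback ``greedy'' construction, by contrast, is not a proof as written: the compactness/uniqueness argument you allude to would require showing that the greedy choice strictly decreases some monotone quantity with no positive lower bound short of $f^*$, and that is precisely the hard part.
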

This result follows from, e.g., \cite[Theorem 2]{fortier-thesis}. For more background on Steiner symmetrizations of functions, we refer the reader to, e.g.,  \cite{CGN-2018,fortier-thesis}.
 
\section{Proof of Theorem \ref{mainThm}}\label{proof-section}

We will show that Theorem \ref{mainThm} is a special case of the following general result.

\begin{theorem}\label{general-thm}
Suppose that $F:{\rm LC}_{\rm c}(\mathbb{R}^n)\to (0,\infty)$ is nondecreasing (respectively,  nonincreasing) under Steiner symmetrizations, i.e., $F(S_H f)\geq F(f)$ (resp., $F(S_H f)\leq F(f)$) for every hyperplane $H$ in $\R^n$, and that it is upper semicontinuous (resp., lower semicontinuous) with respect to the $L^1(\mathbb{R}^n)$ metric. Then for every $f\in{\rm LC}_{\rm c}(\mathbb{R}^n)$, we have $F(f)\leq F(f^*)$ (resp.,  $F(f)\geq F(f^*)$). 
\end{theorem}

As a general meta-principle, Theorem \ref{general-thm} is well-known. In particular, several classical integral inequalities can be obtained as special cases of Theorem \ref{general-thm}, including: the  isoperimetric inequality for the total variation of smooth $f$; a second-moment inequality; and a Hardy--Littlewood-type inequality. 

\begin{proof}
    Let $f\in\LC_{\rm c}(\R^n)$. By Lemma  \ref{convergence-lemma}, there exists a sequence of hyperplanes $\{H_i\}_{i=1}^\infty\subset\Gr(n,n-1)$ such that $\bigcirc_{i=1}^m S_{H_i}f \to f^*$ in the $L^1(\R^n)$ metric as $m\to\infty$. Since $F$ is nondecreasing under Steiner symmetrizations, we have
\[
F(f) \leq F(\bigcirc_{i=1}^m S_{H_i}f), \quad m=1,2,3,\ldots
\]
Taking limit superiors as $m\to\infty$ and applying the upper semicontinuity of $F$, we obtain 
\begin{align*}
F(f) &\leq \limsup_{m\to\infty}F(\bigcirc_{i=1}^m S_{H_i}f) \leq F(f^*). 
\end{align*}
Since $f\in\LC_{\rm c}(\R^n)$ was arbitrary, the result follows. 
\end{proof}

We shall prove that the functional $G_{n,N}(f)$ decreases when a Steiner symmetrization is applied to $f$, and that $G_{n,N}$ is lower semicontinuous. This will prove that $G_{n,N}$ satisfies the hypotheses of Theorem \ref{general-thm}, and the result will follow. Structurally, the proof of Theorem \ref{mainThm} follows along the same lines as Macbeath's proof of \eqref{macbeath-ineq} in \cite{Macbeath}, with adaptations made to the setting of log-concave functions.  The main step of Macbeath's proof of \eqref{macbeath-ineq} is showing that a Steiner symmetrization increases the corresponding functional, namely, for every hyperplane $H$ in $\R^n$ we have
\[
\inf_{P\in\mathscr{C}_N^{\rm in}(K)}\vol_n(K\setminus P) \leq \inf_{Q\in\mathscr{C}_N^{\rm in}(S_H K)}\vol_n(S_H K\setminus Q).
\]
Since $\vol_n(K)=\vol_n(S_H K)$, this is equivalent to 
\begin{equation}\label{Macbeath-mainstep}
\sup_{P\in\mathscr{C}_N^{\rm in}(K)}\vol_n(P) \geq \sup_{Q\in\mathscr{C}_N^{\rm in}(S_H K)}\vol_n(Q). 
\end{equation}

Similarly, since $J(f)=J(f^*)$, Theorem \ref{mainThm} may be equivalently formulated as follows.

\begin{lemma}\label{mainLem}
For every $f\in\LC_{\rm c}(\R^n)$ and every $H\in\Gr(n,n-1)$, we have
\[
\sup_{p\in\mathscr{P}_N(f)}J(p) \geq \sup_{q\in\mathscr{P}_N(S_H f)}J(q).
\]
\end{lemma}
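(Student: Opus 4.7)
The plan is to imitate Macbeath's original argument, constructing, for a given $q \in \mathscr{P}_N(S_H f)$, a log affine minorant $p \in \mathscr{P}_N(f)$ with $J(p) \geq J(q)$ by un-symmetrizing the defining vertices of $q$. It is cleanest to transfer the problem to the epigraph side. Setting $\psi = -\log f \in \mathrm{Cvx}(\R^n)$, I first check directly from the definitions (unpacking level sets) that $\epi(S_H\psi) = S_{\widetilde H}\epi(\psi)$, where the right-hand side is the \emph{classical} Steiner symmetral of the convex set $\epi(\psi) \subset \R^{n+1}$ with respect to $\widetilde H$. Writing $q = e^{-\pi'}$ with $\pi' \in \mathscr{C}_N(S_H\psi)$, the defining vertices $\{(x_i,t_i)\}_{i=1}^N$ (where $t_i = -\log y_i$) then sit inside $S_{\widetilde H}\epi(\psi)$.

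Next I un-symmetrize the vertices. Decomposing $x_i = x_i^H + \tau_i u$ with $x_i^H \in H$, the fact that $(x_i,t_i) \in S_{\widetilde H}\epi(\psi)$ means precisely that the line $(x_i^H,t_i) + \R u$ meets $\epi(\psi)$ in a segment $\{(x_i^H + \sigma u, t_i) : \sigma \in [a_i^-,a_i^+]\}$ whose length $a_i^+ - a_i^-$ equals the length of its intersection with $S_{\widetilde H}\epi(\psi)$, and in particular $|\tau_i| \leq \tfrac12(a_i^+-a_i^-)$. Choose $\tau_i' \in [a_i^-, a_i^+]$ and form the new vertex $(x_i', t_i) := (x_i^H + \tau_i' u, t_i) \in \epi(\psi)$. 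Let $\pi \in \mathscr{C}_N(\psi)$ be the inner linearization with these vertices, and set $p = e^{-\pi} \in \mathscr{P}_N(f)$. To compare the masses, I would apply the layer-cake identity
\[
J(e^{-\pi}) - J(e^{-\pi'}) = \int_{\R} e^{-r}\big(\vol_n(\{\pi \leq r\}) - \vol_n(\{\pi' \leq r\})\big)\,dr,
\]
and then analyze the sublevel polytopes, which are parameterized by the same simplex of convex coefficients $\{\lambda : \lambda_i \geq 0, \sum\lambda_i = 1, \sum\lambda_i t_i \leq r\}$ but with the vertices $x_i$ replaced by $x_i'$. The shifts $c_i := \tau_i' - \tau_i$ affect these polytopes only through a $u$-directional displacement of each vertex.

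The hard part will be showing $J(p) \geq J(q)$. Since the admissible shift interval $[a_i^- - \tau_i,\, a_i^+ - \tau_i]$ for $c_i$ depends on both $x_i^H$ and $t_i$, there is in general no global linear shear of $\R^n$ that carries $\{\pi' \leq r\}$ onto $\{\pi \leq r\}$ with Jacobian one. I would therefore argue slice by slice via Cavalieri in the direction $u$: for each translate of $H$, the $u$-length of the sublevel set slice of $\{\pi \leq r\}$ must be shown to dominate that of $\{\pi' \leq r\}$, which reduces to a statement about the ranges $\max_\lambda \sum \lambda_i\tau_i'$ and $\min_\lambda \sum \lambda_i\tau_i'$ over a linear-programming polytope. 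Selecting the shifts to achieve this simultaneously for all slices and all levels $r$ is the delicate core of the proof; one natural approach is to pick $\tau_i' = \tau_i + \tfrac12(a_i^+ + a_i^-)$ (the midpoint rule), which turns the modification of vertices into a vertex-wise translation along $u$ directly mirroring Macbeath's vertex un-symmetrization step, and then to verify the slice-length inequality via a Brunn--Minkowski or averaging argument over admissible shift choices. Once the slice-wise inequality is secured, integration against $e^{-r}\,dr$ yields $J(p) \geq J(q)$ and closes the lemma.
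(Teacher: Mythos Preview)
Your overall strategy---un-symmetrize the break points and follow Macbeath---is the right one, and matches the paper's. But the proposal has a genuine gap at exactly the point you flag as ``the delicate core.'' A \emph{single} un-symmetrized minorant built from the midpoint rule $\tau_i' = \tau_i + \tfrac12(a_i^+ + a_i^-)$ need not satisfy $J(p) \geq J(q)$. For a concrete obstruction already at the level of convex bodies, take $K = \{(x,y): -1 \le x \le 1,\ 0 \le y \le 1-x^2\}$ and the inscribed triangle in $S_H K$ with vertices $(\pm\tfrac12,0)$, $(0,-0.3)$; your shifted triangle in $K$ has strictly \emph{smaller} area. So no slice-by-slice or Brunn--Minkowski argument will rescue a single choice of shifts, and the layer-cake reduction you set up is a detour.

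What is missing is the second half of Macbeath's trick: together with your minorant $p$ (the paper's $q_f$), also build the \emph{reflected} minorant $r$ with break points at $\tau_i'' = -\tau_i + \tfrac12(a_i^+ + a_i^-)$; both lie under $f$ by the same check. The key step, which replaces your entire slice analysis, is the elementary containment: for every $h$ in the common $\widetilde H$-projection, the original break points $(x_i',\tau_i)$ satisfy
\[
\tfrac12\big(p^-(h)-r^+(h)\big)\ \le\ \tau_i\ \le\ \tfrac12\big(p^+(h)-r^-(h)\big),
\]
so the $f$-convex hull $T$ of this slab contains $\hyp(q)$. Integrating the $u$-length of $T$ over $\widetilde H$ gives $J(q) \le \tfrac12\big(J(p)+J(r)\big)$ directly, whence at least one of $p,r \in \mathscr{P}_N(f)$ has mass $\ge J(q)$. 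That is the whole argument; no level-set comparison is needed. Your vague reference to an ``averaging argument over admissible shift choices'' gestures in this direction but does not supply the reflected construction or the containment inequality that makes it work.
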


\begin{proof}
Choose any $N$ points $x_1,\ldots,x_N\in\hyp(S_H f)$ and let $P_f=\conv_\downarrow\{x_i\}_{i=1}^N$. Each point $x\in\R^{n+1}$ may be expressed as $x=h+t u$, where $h=\proj_{\widetilde{H}}x\in\widetilde{H}$ and $t\in\R$ is the signed perpendicular distance of $x$ to $\widetilde{H}$, which we write in coordinate form as $x=(h,t)\in\widetilde{H}\times\R$.  Define $p_f\in\mathscr{P}_N(S_H f)$ to be the inner log-linearization such that $\hyp(p_f)=P_f$, and set
\begin{align}
    Q_f&:=\conv_\downarrow\left\{\big(h_i,t_i+\tfrac{1}{2}[f^+(h_i)+f^-(h_i)]\big)\right\}_{i=1}^N \label{def-Q-psi}\\
    R_f&:=\conv_\downarrow\left\{\big(h_i,-t_i+\tfrac{1}{2}[f^+(h_i)+f^-(h_i)]\big)\right\}_{i=1}^N.\label{def-R-psi}
\end{align}
Define  inner log-linearizations $q_f$ and $r_f$ of $f$ by $\hyp(q_f)=Q_f$ and $\hyp(r_f)=R_f$. The functions $q_f$ and $r_f$ each have at most $N$ break points, and 
$\proj_{\widetilde{H}}P_f=\proj_{\widetilde{H}}Q_f=\proj_{\widetilde{H}}R_f=\conv\{h_i\}_{i=1}^N$. Since each $(h_i,t_i)\in S_{\widetilde{H}}(\hyp(f))$, we have $|t_i|\leq\frac{1}{2}\left(f^+(h_i)-f^-(h_i)\right)$. Thus by \eqref{steiner-psi} and the definition \eqref{def-Q-psi} of $Q_f$, for each $(h_i,t_i)\in Q_f$ we have 
\begin{align*}
    t_i+\frac{1}{2}\left(f^+(h_i)+f^-(h_i)\right)
    &\leq   \frac{1}{2}\left(f^+(h_i)-f^-(h_i)\right)+\frac{1}{2}\left(f^+(h_i)+f^-(h_i)\right)=f^+(h_i)\\
     t_i+\frac{1}{2}\left(f^+(h_i)+f^-(h_i)\right)
    &\geq   -\frac{1}{2}\left(f^+(h_i)-f^-(h_i)\right)+\frac{1}{2}\left(f^+(h_i)+f^-(h_i)\right)=f^-(h_i).
\end{align*}
Combined with \eqref{epi-psi}, these inequalities show that $Q_f\subset\hyp(f)$. Similarly, $R_f\subset\hyp(f)$. Note that
\begin{align*}
    q_f^+(h_i) &\geq t_i+\frac{1}{2}\left(f^+(h_i)-f^-(h_i)\right),\\
    q_f^-(h_i) &\leq t_i+\frac{1}{2}\left(f^+(h_i)-f^-(h_i)\right),\\
    r_f^+(h_i) &\geq -t_i+\frac{1}{2}\left(f^+(h_i)-f^-(h_i)\right),\\
    r_f^-(h_i) &\leq -t_i+\frac{1}{2}\left(f^+(h_i)-f^-(h_i)\right).
\end{align*}
Hence,
\[
\frac{1}{2}\left(q_f^-(h_i)-r_f^+(h_i)\right) \leq t_i \leq \frac{1}{2}\left(q_f^+(h_i)-r_f^-(h_i)\right)
\]
which shows that each of the points $(h_i,t_i)$ lies in the convex set
\[
T_f:=\conv_\downarrow\left\{(h,t)\in(\proj_{\widetilde{H}}P_f)\times\R: \frac{1}{2}\left(q_f^-(h)-r_f^+(h)\right) \leq t \leq \frac{1}{2}\left(q_f^+(h)-r_f^-(h)\right)\right\}\subset\hyp(f).
\]
Since $P_f=\conv_{\downarrow}\{(h_i,t_i)\}_{i=1}^N$, this implies that  $P_f\subset T_f$. Let $g_f$ be the log-concave function defined by the relation $\hyp(g_f)=T_f$. Then we obtain 
\begin{align*}
    J(p_f) \leq J(g_f)
    &=\int_{\widetilde{H}}\left[g_f^+(h)-g_f^-(h)\right] dh\\
    &=\int_{\proj_{\widetilde{H}}P_f}\left(\frac{1}{2}\left[q_f^+(h)-r_f^-(h)\right]-\frac{1}{2}\left[q_f^-(h)-r_f^+(h)\right]\right)dh\\
    &=\frac{1}{2}\int_{\proj_{\widetilde{H}}Q_f}\left[q_f^+(h)-q_f^-(h)\right]dh
    +\frac{1}{2}\int_{\proj_{\widetilde{H}}R_f}\left[r_f^+(h)-r_f^-(h)\right]dh\\
    &=\frac{1}{2}\int_{\R^n}q_f(x)\,dx+\frac{1}{2}\int_{\R^n}r_f(x)\,dx\\
    &=\frac{1}{2}J(q_f)+\frac{1}{2}J(r_f).
\end{align*}
Hence at least one of $\{J(q_f),J(r_f)\}$ is greater than or equal to $J(p_f)$.
\end{proof}

The final ingredient needed to prove Theorem \ref{mainThm} is the upper semicontinuity of  $G_{n,N}$.

\begin{lemma}\label{cont-lem}
    The functional $G_{n,N}:\LC_{\rm c}(\R^n)\to[0,\infty)$ is upper semicontinuous with respect to the $L^1(\R^n)$ metric. 
\end{lemma}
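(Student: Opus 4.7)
The plan is to decompose $G_{n,N}(f) = J(f) - M_N(f)$, where
\[
M_N(f) := \max_{p \in \mathscr{P}_N(f)} J(p),
\]
the maximum being attained by the compactness argument noted earlier, and verify continuity of $J$ and $M_N$ separately. Continuity of $J$ in the $L^1$ metric is immediate from $J(f) = \|f\|_1$; for general $p \geq 1$, the uniform exponential tail decay that log concavity and coercivity impose on sequences in $\LC_{\rm c}(\R^n)$, combined with $L^p$-boundedness along a convergent sequence, yields equi-integrability, so $L^p$ convergence upgrades to $L^1$ convergence and hence $J(f_k) \to J(f)$.

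For continuity of $M_N$, I would prove the two semicontinuities separately. For \emph{upper semicontinuity} ($\limsup_k M_N(f_k) \leq M_N(f)$), let $p_k \in \mathscr{P}_N(f_k)$ attain $M_N(f_k)$ with break points $\mathbf{X}_N^{(k)} = \{(x_i^{(k)}, t_i^{(k)})\}_{i=1}^N \subset \hyp(f_k)$. Log concavity together with the $L^p$ bound forces both $\sup_k \|f_k\|_\infty$ and $\bigcup_k \supp f_k$ to be bounded, because otherwise log concavity would propagate large values of $f_k$ along line segments into a neighborhood of $\supp f$, contradicting $L^p$ convergence. Hence the break points lie in a common compact set, and a subsequence $\mathbf{X}_N^{(k)}$ converges to some $\mathbf{X}_N^\infty := \{(x_i, t_i)\}$. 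Continuity of log concave functions on the interior of their support forces $t_i \leq f(x_i)$ (after perturbing any break point that happens to land on $\partial \supp f$), so $\mathbf{X}_N^\infty \subset \hyp(f)$. Dominated convergence then gives $J(p_k) \to J(p^*)$ for the induced log affine minorant $p^* \in \mathscr{P}_N(f)$, yielding $\limsup_k M_N(f_k) \leq J(p^*) \leq M_N(f)$.

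For \emph{lower semicontinuity} ($\liminf_k M_N(f_k) \geq M_N(f)$), let $p^* \in \mathscr{P}_N(f)$ attain $M_N(f)$ with break points $\{(x_i, t_i)\}_{i=1}^N$; by a small perturbation I may assume each $x_i$ is a continuity point of $f$. Define $p_k \in \mathscr{P}_N(f_k)$ to be the log affine minorant with break points $\{(x_i, \min(t_i, f_k(x_i)))\}_{i=1}^N$. Since $f_k(x_i) \to f(x_i) \geq t_i$ at each such continuity point, $p_k \to p^*$ pointwise and dominated convergence gives $J(p_k) \to J(p^*)$, proving $\liminf_k M_N(f_k) \geq M_N(f)$. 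The main obstacle is the bookkeeping required near $\partial \supp f$ and the justification of pointwise convergence $f_k(x_i) \to f(x_i)$ at the finitely many break points; both are handled by mild perturbations of the break point configurations, exploiting the fact that $J(p)$ depends continuously on the positions and heights of the break points of $p$.
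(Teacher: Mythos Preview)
The paper does not actually prove this lemma: it says ``The proof follows along the same lines as \cite[Lem.~6.9]{Hoehner-2023} and is thus omitted.'' So there is no in-paper argument to compare against, and your sketch is exactly the kind of compactness-plus-semicontinuity argument one expects that reference to contain.

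Your overall architecture---write $G_{n,N}(f)=J(f)-M_N(f)$ and prove upper and lower semicontinuity of $M_N$ by passing to limits of break-point configurations---is sound and standard. Two points deserve tightening. First, the assertion that $\bigcup_k \supp f_k$ is bounded is false as stated: functions in $\LC_{\rm c}(\R^n)$ may have full support (e.g.\ $e^{-\|x\|^2}$). What you actually need, and what your argument really uses, is a uniform bound on the \emph{break points} of the optimal $p_k$. This follows because the $L^p$ bound and log concavity give a uniform sup-norm bound $\sup_k\|f_k\|_\infty<\infty$ and uniform exponential decay of $f_k$ at infinity; a break point $(x_i^{(k)},t_i^{(k)})$ with $\|x_i^{(k)}\|\to\infty$ would then force either $t_i^{(k)}\to 0$ (making that break point asymptotically irrelevant to $J(p_k)$) or the support of $p_k$ to escape to infinity, contradicting $J(p_k)\geq M_N(f_k)\to M_N(f)>0$. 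Second, the step ``$f_k(x_i)\to f(x_i)$ at continuity points of $f$'' is true but not immediate from $L^p$ convergence alone; it uses the fact that for log concave (equivalently, convex after taking $-\log$) functions, $L^p$ convergence upgrades to locally uniform convergence on the interior of $\supp f$. Once you invoke that, your perturbation arguments near $\partial\supp f$ go through as written.
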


We will need the following lemma. 
\begin{lemma}\label{J-cont-lem}
If $f=e^{-\psi},\,f_j=e^{-\psi_j}\in\LC_{\rm c}(\R^n)$ and $f_j\to f$ in $L^1(\R^n)$, then
$J(f_j)\to J(f)$ as $j\to\infty$.
\end{lemma}
\begin{proof}
By the definition of the $L^1$ norm, we have
\[
|J(f_j)-J(f)|
=\left|\int_{\R^n}\left(f_j(x)-f(x)\right)dx\right|
\le \int_{\R^n}|f_j(x)-f(x)|\,dx
=\|f_j-f\|_{L^1}\longrightarrow 0
\]
as $j\to\infty$.
\end{proof}

\begin{proof}[Proof of Lemma \ref{cont-lem}]
Since $G_{n,N}=J-S_{n,N}$ and $J(f)=\int_{\R^n}f$ is continuous on $L^1(\R^n)$ when restricted to functions in $\LC_{\rm c}(\R^n)$,
it suffices to show that
\[
S_{n,N}(f):=\sup_{p\in \mathscr{P}_N(f)} J(p) 
\]
is $L^1$-lower semicontinuous on $\LC_{\rm c}(\R^n)$. That is, if $f\in\LC_{\rm c}(\R^n)$ and $\{f_j\}_{j=1}^\infty\subset\LC_{\rm c}(\R^n)$ where $f_j\to f$ in $L^1(\R^n)$, then
\begin{equation}\label{eq:lsc-SN}
\liminf_{j\to\infty} S_{n,N}(f_j)\geq S_{n,N}(f).
\end{equation}

Choose a sequence of ``almost optimal’’ tent functions
\[
p^{(k)} \in \mathscr{P}_N(f)\quad \text{such that}\quad
J(p^{(k)})\geq S_{n,N}(f) - \frac{1}{k}.
\]
Each $p^{(k)}$ has a break point set
\[
\mathbf{X}^{(k)}=\left\{(x^{(k)}_i,y^{(k)}_i)\right\}_{i=1}^{M_k}
\subset \epi(\psi),\quad M_k\leq N,
\]
and we  write $p^{(k)}=e^{-\psi^{(k)}_{\mathbf{X}}}$ for the corresponding inner linearization   
$\psi^{(k)}_{\mathbf{X}}$.

Since $M_k\leq N$, for each fixed $k$ there are at most $N$ break points
$\{x^{(k)}_1,\dots,x^{(k)}_{M_k}\}$.  
Since $f_j\to f$ in $L^1$, we may extract a subsequence of $\{f_j\}$ that
converges almost everywhere.
Then for each fixed $k$, the finite set
$\{x^{(k)}_1,\dots,x^{(k)}_{M_k}\}$ admits a further subsequence  
(of the a.e.\ convergent one) on which
\[
f_{j}(x^{(k)}_i)\longrightarrow f(x^{(k)}_i)
\quad\text{for}\quad 1\leq i\leq M_k.
\]
A diagonal argument over $k$ produces a single subsequence
$\{f_{j_m}\}$ such that for every $k$,
\begin{equation}\label{eq:values-at-nodes}
    f_{j_m}(x^{(k)}_i)
     \longrightarrow
    f(x^{(k)}_i)
    \quad\text{for}\quad
    1\leq i\leq M_k.
\end{equation}
Since $(x^{(k)}_i,t^{(k)}_i)\in\hyp(f)$, we have $t^{(k)}_i>0$; hence
$f(x^{(k)}_i)\geq t^{(k)}_i>0$, so the values are all finite and no boundary issues arise.  
From now on, all limits are taken along this (not relabeled) diagonal subsequence.

Fix $k$ and let $p^{(k)}=e^{-\psi^{(k)}_{\mathbf{X}}}$ with break points 
$\{(x^{(k)}_i,y^{(k)}_i)\}$.  
Define modified heights for $f_j$ by
\[
y^{(k,j)}_i := \max\left\{y^{(k)}_i,\psi_j(x^{(k)}_i)\right\}.
\]
Then $(x^{(k)}_i,y^{(k,j)}_i)\in\epi(\psi_j)$, so the corresponding inner
linearization $\psi^{(k,j)} := \psi_{X^{(k,j)}}$ with
$\mathbf{X}^{(k,j)}:=\{(x^{(k)}_i,y^{(k,j)}_i)\}$ 
satisfies $q^{(k,j)}=e^{-\psi^{(k,j)}}\in \mathscr{P}_N(f_j)$.
Thus,
\[
S_{n,N}(f_j)\geq J(q^{(k,j)})\quad\text{for all $k,j$}.
\]
By \eqref{eq:values-at-nodes}, we have
$\psi_j(x^{(k)}_i)\to\psi(x^{(k)}_i)$ for all $i$, hence
\[
\delta^{(k)}_j := \max_{1\leq i\leq M_k}|y^{(k,j)}_i - y^{(k)}_i|
\longrightarrow 0\quad\text{as}\quad j\to\infty.
\]

By Definition~\ref{inner-linearization-def}, for every $x\in\conv\{x^{(k)}_1,\ldots,x^{(k)}_{M_k}\}$, 
we have
\[
|\psi^{(k,j)}(x) - \psi^{(k)}_{\mathbf{X}}(x)| \leq \delta^{(k)}_j.
\]
Thus, $\psi^{(k,j)} \longrightarrow \psi^{(k)}_{\mathbf{X}}$ uniformly in $x$. 
Exponentiating, we obtain the pointwise convergence $q^{(k,j)}\to q^{(k)}$,  where $q^{(k)}:=e^{-\psi^{(k)}_{\mathbf{X}}}=p^{(k)}$. 
Furthermore, since $\psi^{(k)}_{\mathbf{X}}\geq \psi$, we have $q^{(k)}\leq f$, and hence
\[
0\leq q^{(k,j)}\leq q^{(k)}\leq f\in L^1(\R^n)\cap\LC_{\rm c}(\R^n).
\]
Thus, by the dominated convergence theorem
\begin{equation}\label{eq:conv-Jq}
     J(q^{(k,j)})\longrightarrow J(q^{(k)}).
\end{equation}

Fix $\varepsilon>0$.  
By the definition of the supremum, choose $k$ large enough so that
\[
J(p^{(k)})\geq S_{n,N}(f)-\varepsilon.
\]
Then by \eqref{eq:conv-Jq},
\[
\liminf_{j\to\infty} S_{n,N}(f_j)
\geq 
\liminf_{j\to\infty}J(q^{(k,j)})
=
J(q^{(k)})
\geq S_{n,N}(f)-\varepsilon.
\]
Since $\varepsilon>0$ was arbitrary, we obtain \eqref{eq:lsc-SN}.
\end{proof}


\begin{remark}\label{mainRmk}
Let $K\in\mathcal{K}^n$ be a convex body that (without loss of generality) contains the origin in its interior. By \eqref{lev-set-indicator} we have $\mathbbm{1}_K^*=\mathbbm{1}_{B_K}$, so  choosing $f=\mathbbm{1}_K$ in Theorem \ref{mainThm}, we get $G_{n,N}(\mathbbm{1}_K) \leq G_{n,N}(\mathbbm{1}_{B_K})$. Since $J(\mathbbm{1}_K)=\vol_n(K)=\vol_n(B_K)=J(\mathbbm{1}_{B_K})$, the previous inequality is equivalent to $\sup_{p\in\mathscr{P}_N(\mathbbm{1}_K)}J(p)\geq \sup_{q\in\mathscr{P}_N(\mathbbm{1}_{B_K})}J(q)$. Observe that for a best-approximating inner log-linearization $\widehat{p}\in \argmax_{p\in\mathscr{P}_N(\mathbbm{1}_K)}J(p)$, where $\hyp(\widehat{p})=\conv_{\downarrow}\{(x_i,t_i)\}_{i=1}^N\subset\hyp(\mathbbm{1}_K)=K\times[0,1]$, we must have that  $t_1=\ldots=t_N=1$. Consequently, \[\conv_{\downarrow}\{(x_i,t_i)\}_{i=1}^N=(\conv\{x_1,\ldots,x_N\})\times[0,1]\] is a prism with height 1 and its base $\conv\{x_1,\ldots,x_N\}\in\mathscr{C}_N^{\rm in}(K)$ is a best-approximating polytope of $K$ with at most $N$ vertices. Therefore,
$J(\widehat{p})=\vol_n(\conv\{x_1,\ldots,x_N\})$ and hence $G_{n,N}(\mathbbm{1}_K)=\inf_{P\in\mathscr{C}_N^{\rm in}(K)}\vol_n(K\setminus P)$. From this, we recover Macbeath's result \eqref{macbeath-ineq}.
\end{remark}

\begin{remark}
Very recently in \cite{Hoehner-2023}, the first named author proved the analogue of Theorem \ref{mainThm} for the mean width metric of log-concave functions. The result states that for every log-concave function, its reflectional hypo-symmetrization is always harder to approximate by inner log-linearizations with a fixed number of break points. (For brevity, we refer the reader to \cite{Hoehner-2023} for the specific definitions and precise statement of the result.) 
\end{remark}




\section{Extension to $\alpha$-concave functions}\label{alpha-section}

 Theorem \ref{mainThm} and Definition \ref{def-f-cvx-hull} can be extended to a parameterized family of functions which includes the log-concave functions. Let $\alpha\in[-\infty,+\infty]$ be given. A function $f:\R^n\to[0,\infty)$ is \emph{$\alpha$-concave} if $f$ has convex support $\supp(f)$ and for every $x,y\in\supp(f)$ and every $\lambda\in[0,1]$, it holds that
\begin{equation}
f(\lambda x+(1-\lambda)y) \geq M_\alpha^{(\lambda,1-\lambda)}(f(x),f(y)),
\end{equation}
where for $s,t,u,v>0$ the \emph{$\alpha$-means} $M_\alpha^{(s,t)}(u,v)$ are defined by
\begin{equation}
M_\alpha^{(s,t)}(u,v)=\begin{cases}
(su^\alpha+tv^\alpha)^{1/\alpha}, &\text{if }\alpha\neq 0;\\
u^s v^t, &\text{if }\alpha=0;\\
\min\{u,v\}, &\text{if }\alpha=-\infty;\\
\max\{u,v\}, &\text{if }\alpha=+\infty.
\end{cases}
\end{equation}

\noindent The extreme cases $\alpha\in\{-\infty,0,+\infty\}$ are each understood in a limiting sense. 

Given $\alpha\in[-\infty,+\infty]$, let
\[
\mathcal{C}_\alpha(\R^n)=\left\{f:\R^n\to[0,\infty)\big| \,\substack{f\text{ is }\alpha\text{-concave, upper semicontinuous, integrable,}\\{f\not\equiv 0,\, \dim(\supp(f))=n, \, o\in\supp(f) \text{ and }\lim_{\|x\|\to\infty}f(x)=0}}\right\}.
\]
Note that  if $\alpha_1<\alpha_2$ then $\mathcal{C}_{\alpha_1}(\R^n)\supset\mathcal{C}_{\alpha_2}(\R^n)$ (see \cite{BrascampLieb-1976}). Choosing special values of $\alpha$ yields the following notable subfamilies:
\begin{itemize}
\item $\mathcal{C}_{-\infty}(\R^n)$ is the set of quasiconcave functions. It is the largest set of $\alpha$-concave functions since   $\mathcal{C}_{-\infty}(\R^n)\supset \mathcal{C}_\alpha(\R^n)$ for every $\alpha\in[-\infty,+\infty]$. 

\item $\mathcal{C}_0(\R^n)$ is the set of log-concave functions on $\R^n$.

\item $\mathcal{C}_1(\R^n)$ is the set of concave functions defined on convex sets $\Omega$ and extended by 0 outside of $\Omega$.

\item $\mathcal{C}_{+\infty}(\R^n)$ is the set of multiples of characteristic functions of convex sets in $\R^n$.
\end{itemize}

Let $\alpha\in\R\setminus\{0\}$. Given $f\in\mathcal{C}_\alpha(\R^n)$, the \emph{base function} $\base_\alpha f$ of $f$ is  defined by (see \cite{Rotem2013}) 
\[
\base_\alpha f=\frac{1-f^\alpha}{\alpha}.
\]
In other words, $\base_\alpha f$ is the convex function such that 
\[
\text{for all } x\in\R^n,\quad f(x)=(1-\alpha \base_\alpha f(x))_+^{1/\alpha}
\]
where for $s\in\R$ we set $s_+:=\max\{s,0\}$. Since $f\in\mathcal{C}_\alpha(\R^n)$, we have $\base_\alpha f\in{\rm Conv}_{\rm c}(\R^n)$. In the limiting case $\alpha\to 0$, we have the identity $\base_0 f=-\log f$. In particular, if $K\in\mathcal{K}^n$ then $\base_\alpha\mathbbm{1}_K=I_K^\infty$ for every $\alpha\in\R$, where $I_K^\infty:\R^n\to\R\cup\{+\infty\}$ is the (convex) indicator function of $K$ defined by \begin{equation*}
I_K^\infty(x)=
    \begin{cases}
        0, & \text{if } x \in K;\\
        +\infty, & \text{if } x\not\in K.
    \end{cases}
\end{equation*}
For further relevant properties of $\alpha$-concave functions, see \cite[Proposition 24]{Milman-Rotem}.

Extending Definition \ref{def-f-cvx-hull}, we define the following notion of a convex hull  contained  in the hypograph of an $\alpha$-concave function $f$. To state the definition and the result that follows, let $\mathcal{A}$ denote the set of all $\alpha\in\R$ such that the function $g_\alpha:\R\to[0,\infty)$ defined by $g_\alpha(x):= (1-\alpha x)^{1/\alpha}_+$ is decreasing on its domain (and hence is injective), with the special case $\alpha=0$ being understood in the limiting sense.

\begin{definition}\label{def-f-cvx-hull-alpha}
Let $\alpha\in\mathcal{A}$. Given a convex function $\psi:\R^n\to\R$ and a  set $S\subset\epi(\psi)$, let $(1-\alpha S)_+^{1/\alpha}:=\left\{\left(x,(1-\alpha y)_+^{1/\alpha}\right)\in\R^n\times\R:\, (x,y)\in S\right\}\subset\hyp(f)$.   For $f=(1-\alpha\psi)_+^{1/\alpha}\in\mathcal{C}_\alpha(\R^n)$ with $\psi=\base_\alpha f$ and a finite set $\mathbf{Y}_N\subset\hyp(f)$, we define 
\[
\conv_{\downarrow} \mathbf{Y}_N:=\left[1-\alpha\conv_{\uparrow}\left(\frac{1-(\mathbf{Y}_N)^\alpha}{\alpha}\right)\right]_+^{1/\alpha}
\]
where $\frac{1-(\mathbf{Y}_N)^\alpha}{\alpha}:=\left\{\left(x,\frac{1-y^\alpha}{\alpha}\right)\in\R^n\times\R:\,(x,y)\in \mathbf{Y}_N\right\}\subset\epi(\base_\alpha f)$.     
\end{definition}
If $\mathbf{Y}_N$ is a finite point set in $\hyp(f)$, then the function $p_{\mathbf{Y}_N}\in\mathcal{C}_\alpha(\R^n)$ defined by $\hyp(p_{\mathbf{Y}_N})=\conv_{\downarrow} \mathbf{Y}_N$ is called an \emph{inner $\alpha$-linearization} of $f$. A break point of an inner $\alpha$-linearization is just a break point of its convex base function, which is an inner linearization of $\psi$. Let $\mathscr{P}_{\alpha,N}(f)$ denote the class of all inner $\alpha$-linearizations of $f$ with at most $N$ break points. 

Theorem \ref{mainThm} can be extended as follows.

\begin{theorem}\label{mainThm2} 
Fix integers $n\geq 1$ and $N\geq n+2$, and let $\alpha\in\mathcal{A}$. Consider the functional $G_{n,N,\alpha}:\mathcal{C}_\alpha(\R^n)\to [0,\infty)$ defined by
\[
G_{n,N,\alpha}(f) := 
\inf_{p\in\mathscr{P}_{\alpha,N}(f)}\{J(f)-J(p)\}.
\]
Then for every $f\in\mathcal{C}_\alpha(\R^n)$, we have
\[
G_{n,N,\alpha}(f) \leq G_{n,N,\alpha}(f^*).
\]
\end{theorem}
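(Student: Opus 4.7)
The plan is to follow the same three-step scheme used in the proof of Theorem \ref{mainThm}, with the $(\alpha,f)$-convex hull of Definition \ref{def-f-cvx-hull-alpha} playing the role of the $f$-convex hull throughout. The three steps are: (i) a Steiner symmetrization inequality
\[
\max_{p\in\mathscr{P}_{N,\alpha}(f)}J(p)\geq\max_{q\in\mathscr{P}_{N,\alpha}(S_H f)}J(q), \qquad H\in\Gr(n,n-1),
\]
valid for every $f\in\mathcal{C}_\alpha(\R^n)$; (ii) convergence $\bigcirc_{i=1}^m S_{H_i}f\to f^*$ in $L^1(\R^n)$ along a suitable sequence $\{H_i\}_{i=1}^\infty\subset\Gr(n,n-1)$, which is supplied by Lemma \ref{convergence-lemma}; and (iii) continuity of $G_{n,N,\alpha}$ with respect to the $L^1(\R^n)$ metric, the $\alpha$-analogue of Lemma \ref{cont-lem}. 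Granted these three ingredients, iterating (i) along $\{H_i\}$ and passing to the limit using (iii) yields $G_{n,N,\alpha}(f)\leq G_{n,N,\alpha}(f^*)$, exactly as in the closing step of the proof of Theorem \ref{mainThm}.

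The core of the argument is step (i), for which I would transplant the proof of Lemma \ref{mainLem} almost verbatim. The construction of the auxiliary sets $P_f,Q_f,R_f,T_f$ and the break-point shifts $t_i\mapsto t_i+\tfrac{1}{2}[f^+(x_i')+f^-(x_i')]$ are purely geometric and depend only on the decomposition $x=x'+tu$ of points in $\hyp(f)\subset\R^{n+1}$, the convex-valued $u$-slice structure of $\hyp(f)$ (which holds for any quasi-concave $f$, hence for any $f\in\mathcal{C}_\alpha(\R^n)$), and the Fubini identity $J(g)=\int_{\widetilde H}[g^+(h)-g^-(h)]\,dh$. None of these steps invokes log concavity. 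The only analytic ingredient is the inclusion $\conv_{\alpha,f}B\subset\hyp(f)$ for $B\subset\hyp(f)$, which is built into Definition \ref{def-f-cvx-hull-alpha} via the strictly monotone transport $y\mapsto(1-\alpha y)_+^{1/\alpha}$ from $\epi(\base_\alpha f)$ onto $\hyp(f)$. Finally, step (i) requires that $S_H f$ be itself $\alpha$-concave; this follows from the identification of $S_H f$ along each line parallel to $u$ with the one-dimensional symmetric decreasing rearrangement of $f$ along that line, combined with the classical fact that one-dimensional rearrangement preserves $\alpha$-concavity.

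For step (iii), the compactness argument of \cite[Lem. 6.9]{Hoehner-2023} carries over after substituting $\base_\alpha f$ for $-\log f$ and $\alpha$-affine minorants for log affine minorants, using the continuity of the volume of the $(\alpha,f)$-convex hull under perturbation of its defining points. The main technical obstacle lies in step (i), specifically in verifying that the slice-length identity
\[
g_f^+(h)-g_f^-(h)=\tfrac{1}{2}[q_f^+(h)-r_f^-(h)]-\tfrac{1}{2}[q_f^-(h)-r_f^+(h)]
\]
for the set $T_f$ (defined as the $(\alpha,f)$-convex hull of the analogous shifted set) continues to hold: in the log concave case it is used implicitly in the proof of Lemma \ref{mainLem}, and its $(\alpha,f)$-analogue requires checking that the monotone transport between $\epi(\base_\alpha f)$ and $\hyp(f)$ preserves the relevant $u$-direction slice lengths. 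A secondary issue, of mostly technical nature, is the edge-case analysis when $\alpha<0$ and $\base_\alpha f$ is bounded above by $1/|\alpha|$, which must be handled when controlling the behavior of $\alpha$-affine minorants near the boundary of $\supp(f)$.
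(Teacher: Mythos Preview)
Your proposal is correct and follows essentially the same approach as the paper: the paper's proof of Theorem \ref{mainThm2} is a one-sentence pointer stating that the argument for Theorem \ref{mainThm} carries over ``in complete analogue'' with ``the obvious modifications'' and the injectivity of $x\mapsto(1-\alpha x)_+^{1/\alpha}$ on its support, which is precisely your three-step scheme (Steiner inequality, $L^1$ convergence to $f^*$, continuity of the functional). You have in fact supplied more detail than the paper does, and the technical obstacles you flag---preservation of $u$-direction slice lengths under the vertical transport $y\mapsto(1-\alpha y)_+^{1/\alpha}$ and the $\alpha<0$ boundary behavior---are exactly the points the paper absorbs into the phrase ``obvious modifications''; note that since the transport acts only on the last coordinate of $\R^n\times\R$ while $u\in\R^n$, the $u$-slice structure is indeed unaffected.
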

Theorem \ref{mainThm} is then just the special case  $\alpha=0$. The proof follows along the same lines as that of Theorem \ref{mainThm}, where we apply the same lemmas with the obvious modifications, and use the fact that the real function $x\mapsto (1-\alpha x)_+^{1/\alpha}$ is injective on its support when $\alpha\in\mathcal{A}$.  

\section{Appendix: further applications of Theorem \ref{general-thm}}\label{sec:other-apps}

In this section, we record several well-known classical inequalities which can also be obtained from 
Theorem~\ref{general-thm}. 

\subsubsection{Isoperimetric  inequality for total variation.}
  For smooth compactly supported $f\in \LC_{\rm c}(\R^n)\cap C^1(\R^n)$, the total variation of $f$ can be expressed as
  \[
    F(f):=\int_{\R^n} |\nabla f(x)|\,dx .
  \]
  It is well-known that this functional is nonincreasing under Steiner symmetrizations. Indeed, for any set $E\subset\R^n$ of finite perimeter we have $P(S_H E)\leq P(E)$, where $P$ denotes the perimeter functional (see, e.g., \cite{CCF-annals}). Applying the coarea formula and using the inequality $P(S_H(\{f>t\}))\leq P(\{f>t\})$ for every $t$, we obtain
  \begin{align*}
F(S_H f)=\int_{\R^n}|\nabla S_H f(x)|\,dx &= \int_0^\infty P(S_H(\{f>t\}))\,dt \\
&\leq \int_0^\infty P(\{f>t\})\,dt=\int_{\R^n}|\nabla f(x)|\,dx=F(f).
  \end{align*}
  The lower semicontinuity of $F$ in $L^1$ is also well-known; see, e.g., Theorem 1 in Section 5.2 of \cite{Evans-Gariepy}. Theorem~\ref{general-thm} therefore gives $F(f)\ge F(f^*)$.

  \subsubsection{Second moment (moment of inertia).}
Let
\[
  F(f):=\int_{\R^n} |x|^2 f(x)\,dx .
\]
We prove that: (i) $F(f)<\infty$ for all $f\in\LC_{\rm c}(\R^n)$, (ii) Steiner symmetrization $S_H$ does not increase $F$, and (iii) $F$ is lower semicontinuous with respect to $L^1$ convergence. 

To see (i), let $f=e^{-\psi}\in\LC_{\rm c}(\R^n)$. Then $\psi\in{\rm Conv}_{\rm c}(\R^n)$, so $\psi$ is coercive. Thus, there exist $a,b\in\R$ with $a>0$ such that $\psi(x)\geq a|x|+b$ for all $x\geq R$. Hence,
\[
F(f) = \int_{\R^n} |x|^2 f(x)\mathbbm{1}_{|x|\leq R}(x)\,dx+\int_{\R^n} |x|^2 f(x)\mathbbm{1}_{|x|> R}(x)\,dx.
\]
For the first integral, the set of integration $B_R(0)$ is compact and $f$ is upper semicontinuous, so $f$ is finite on $B_R(0)$. Thus, $\int_{\R^n} |x|^2 f(x)\mathbbm{1}_{|x|\leq R}(x)\,dx$ is finite. For the second integral, using spherical coordinates we get
\[
\int_{\R^n} |x|^2 f(x)\mathbbm{1}_{|x|> R}(x)\,dx\leq e^b\int_{\R^n} |x|^2 e^{-a|x|}\mathbbm{1}_{|x|> R}(x)\,dx=e^b\vol_{n-1}(\partial B_n)\int_R^\infty r^{n+1}e^{-ar}\,dr<\infty. 
\]
Thus, $F(f)<\infty$.

For (ii), fix a hyperplane $H$ with unit normal $u\in\Sp$ and write every $x\in\R^n$ uniquely as
$x=y+tu$ with $y\in H$ and $t\in\R$. Then we have 
\[
  |x|^2=\langle y+tu,y+tu\rangle = |y|^2+2t\langle y,u\rangle +|tu|^2=|y|^2+t^2.
\]
For a measurable set $E\subset\R^n$, let $E_y:=\{t\in\R:\, y+tu\in E\}$ denote its one-dimensional fiber in the $u$-direction. By definition, the Steiner symmetral $S_H E$ satisfies $\vol_1((S_H E)_y)=\vol_1(E_y)$ for every $y\in H$, and $(S_H E)_y$ is a centered interval of the form $\Big[-\tfrac{|E_y|}{2},\,\tfrac{|E_y|}{2}\Big]$ (up to null sets).

For the even convex function $\phi(t):=t^2$ and any measurable set $A\subset\R$ with $\vol_1(A)=m$,
by a standard argument using Jensen's inequality we get
\begin{equation}\label{eq:1D-convex}
  \int_A \phi(t)\,dt \geq \int_{-m/2}^{m/2} \phi(t)\,dt=\int_{S_H A}\phi(t)\,dt.
\end{equation}
Applying \eqref{eq:1D-convex} to the fibers $E_y$ and integrating in $y\in H$, we obtain
\begin{align*}
  \int_{E} |x|^2\,dx
  &= \int_H \left(|y|^2\vol_1(E_y) + \int_{E_y} t^2\,dt\right) d\mathcal H^{n-1}(y)\\
  &\geq \int_H \left(|y|^2\vol_1((S_H E)_y) + \int_{(S_H E)_y} t^2\,dt\right) d\mathcal H^{n-1}(y)
  = \int_{S_H E} |x|^2\,dx .
\end{align*}
Using the layer-cake formula for $f\geq 0$, we get
\[
  f(x)=\int_0^\infty \mathbbm{1}_{\{f>t\}}(x)\,dt \quad \text{and} \quad 
  F(f)=\int_0^\infty \left(\int_{\{f>t\}} |x|^2\,dx\right) dt.
\]
Using this together with the fact that $S_H\{f>t\}=\{S_H f>t\}$, and using the set inequality above holding at each level $t$, we obtain 
\[
  F(S_H f)= \int_0^\infty \int_{\{S_H f>t\}} |x|^2\,dx\,dt
  \leq \int_0^\infty \int_{\{f>t\}} |x|^2\,dx\,dt
  = F(f).
\]

Finally, we prove (iii), i.e., that $F$ is lower semicontinuous in $L^1$. Let $f_k,f\in L^1(\R^n)$ with $f_k\to f$ in $L^1(\R^n)$ and $f_k,f\geq 0$. Fix $R>0$.
Since $|x|^2\mathbbm{1}_{B_R(0)}\in L^\infty(\R^n)$, we have
\[
  \int_{B_R(0)} |x|^2 f_k(x)\,dx \to \int_{B_R(0)} |x|^2 f(x)\,dx
  \quad\text{as}\quad k\to\infty.
\]
Since $F(f_k)\geq \int_{B_R(0)} |x|^2 f_k(x)\,dx$ for all $k\in\mathbb{N}$, taking the limit inferior yields
\[
  \liminf_{k\to\infty} F(f_k) \geq \int_{B_R(0)} |x|^2 f(x)\,dx.
\]
Finally, letting $R\uparrow\infty$ and applying the monotone convergence theorem, we conclude that
\[
  \liminf_{k\to\infty} F(f_k) \geq \sup_{R>0}\int_{B_R(0)} |x|^2 f(x)\,dx
  = \int_{\R^n} |x|^2 f(x)\,dx = F(f).
\]
Therefore, $F$ is lower semicontinuous with respect to $L^1$ convergence.

  \subsubsection{Hardy--Littlewood-type inequality with radial weight.}
  
  Let $g:\R^n\to[0,\infty)$ be radially symmetric,  nonincreasing, and bounded a.e., and set
  \[
    F(f):=\int_{\R^n} f(x)g(x)\,dx .
  \]
  We claim that $F$ is nondecreasing under Steiner symmetrizations and continuous in $L^1$;
  this will imply $F(f)\le F(f^*)$ by Theorem~\ref{general-thm}.

  Fix a hyperplane $H=u^\perp$ and write every $x\in\R^n$ as $x=y+tu$ where $y\in H$ and $t\in\R$. Then as before, $|x|^2=|y|^2+t^2$. By the hypotheses on $g$, we have $g(x)=g(y+tu)=:\omega_{|y|}(|t|)$ where $\omega_r:[0,\infty)\to[0,\infty)$ is even in $t$ and nonincreasing in $|t|$ (since $g$ depends only on $|x|$ and is radially nonincreasing). For a nonnegative $f$, define the fiber function $f_y(t):=f(y+tu)$. The Steiner symmetral $S_H f$ is obtained by replacing, for a.e. $y\in H$, the fiber $f_y$ by its one-dimensional symmetric decreasing rearrangement $f_y^*$ (preserving the one-dimensional integral on each fiber). The one-dimensional Hardy--Littlewood inequality with an even, decreasing weight function yields, for a.e. $y\in H$,
\[
\int_{\R^n}f_y(t)\omega_{|y|}(|t|)\,dt\leq \int_{\R^n}f_y^*(t)\omega_{|y|}(|t|)\,dt.
\]
Integrating over $y\in H$ and applying Fubini's theorem, we get
\begin{align*}
    F(f) =\int_{\R^n}f(x)g(x)\,dx&=\int_H\int_{\R}f_y(t)\omega_{|y|}(|t|)\,dt\\
    &\leq \int_H\int_{\R}f_y^*(t)\omega_{|y|}
    (|t|)\,dt
    =\int_{\R^n}(S_H f)(x)g(x)\,dx=F(S_H f).
\end{align*}

To establish the continuity of $F$ in $L^1$, note that by hypothesis $g\in L^\infty(\R^n)$ and hence for any sequence $f_k\to f$ in $L^1(\R^n)$, we have
\begin{align*}
    \left|F(f_k)-F(f)\right| =\left|\int_{\R^n}(f_k(x)-f(x))g(x)\,dx\right|
    \leq\|g\|_{L^\infty}\|f_k-f\|_{L^1}\longrightarrow 0\quad\text{as}\quad k\to\infty.
\end{align*}
\section*{Acknowledgments}

We would like to thank the Perspectives on Research In Science \&  Mathematics (PRISM) program at Longwood University for its  support. SH would  like to thank Michael Roysdon and Sudan Xing for the discussions. We are also indebted to the anonymous referee(s) for their careful reading of the manuscript and many helpful suggestions.

\bibliographystyle{plain}
\bibliography{main}


\vspace{3mm}

\noindent {\sc Department of Mathematics \& Computer Science, Longwood University, U.S.A.}

\noindent {\it E-mail address:} {\tt hoehnersd@longwood.edu; jlia.novaes@live.longwood.edu}

\end{document}